\documentclass[final,1p,number,fleqn,12pt]{elsarticle}
\usepackage{amsmath,amssymb,amsthm,mathptmx}

\textwidth=455pt \evensidemargin=8pt \oddsidemargin=8pt
\marginparsep=8pt \marginparpush=8pt \textheight=640pt
\topmargin=-40pt

\newtheorem{theorem}{Theorem}[section]
\newtheorem{definition}[theorem]{Definition}
\newtheorem{lemma}[theorem]{Lemma}
\newtheorem{corollary}[theorem]{Corollary}
\newtheorem{remark}{Remark}
\newtheorem{example}[theorem]{Example}
\DeclareMathOperator{\RE}{Re}
\newcommand{\UD}{\mathbb{D}}

\begin{document}
\begin{frontmatter}

\title{A first-order differential double subordination with applications\tnoteref{t1}}

\author[rma]{Rosihan M. Ali}
\address[rma]{School of Mathematical Sciences,
Universiti Sains Malaysia, 11800 USM, Penang, Malaysia}
\ead{rosihan@cs.usm.my}

\author[Cho]{Nak Eun Cho}
\address[Cho]{Department of Applied Mathematics,
Pukyong National University, Busan 608-737, South Korea}
\ead{necho@pknu.ac.kr}

\author[Kwon]{Oh Sang Kwon }
\address[Kwon]{Department of Mathematics,
Kyungsung University, Busan 608-736, South Korea}
\ead{oskwon@ks.ac.kr}

\author[rma,vravi]{V. Ravichandran}
\address[vravi]{Department of Mathematics,
University of Delhi, Delhi-110 007, India}
 \ead{vravi68@gmail.com}

\tnotetext[t1]{The works by the first and the fourth authors were
supported in part by grants from Universiti Sains Malaysia and
University of Delhi respectively. The second author was supported by
the Basic Science Research Program through the National Research
Foundation of Korea (NRF) funded by the Ministry of Education,
Science and Technology (No. 2009-0066192). This work was completed
during the visit of the first and the fourth authors to Pukyong
National University. The authors are thankful to the referees for
their suggestions that helped improve the presentation of this
manuscript. }

\begin{abstract}  Let $q_1$ and $q_2$ belong to a certain class of
normalized analytic univalent functions in the open unit disk of the
complex plane. Sufficient conditions are obtained for normalized
analytic functions $p$ to satisfy the double subordination chain
$q_1(z)\prec p(z)\prec  q_2(z)$.  The differential sandwich-type
result obtained is applied to normalized univalent functions and to
$\Phi$-like functions.
\end{abstract}

\begin{keyword}
Differential subordination\sep differential superordination\sep best
subordinant\sep best dominant\sep $\Phi$-like functions

\MSC[2010]{30C45, 30C80}
\end{keyword}

\end{frontmatter}

\section{Introduction}

Let $\mathcal{H}$ be the class consisting of analytic functions  in
the open unit disk $\UD :=\{z\in \mathbb{C}:|z|<1\}$ of the complex
plane $\mathbb{C} $. For $a\in\mathbb{C}$, let $\mathcal{
H}[a,n]:=\{f\in\mathcal{H}: f(z)=a+a_nz^n+a_{n+1}z^{n+1}+\cdots\}$,
and  $\mathcal{A}:=\{f \in \mathcal{H}: f(0)=0, f'(0)=1\}$.  A
function $f \in\mathcal{H}$ is said to be subordinate to an analytic
function $g\in\mathcal{H}$, or $g$ superordinates $f$,  written
$f(z)\prec g(z)$ $(z\in \UD )$, if there exists a Schwarz function
$w$, analytic in $\UD $ with $w(0)=0$ and $|w(z)|<1$,  satisfying
$f(z)=g(w(z))$.  If the function $g$ is univalent in $\UD $, then
$f(z) \prec g(z)$ is equivalent to $f(0)=g(0)$ and $f(\UD)\subseteq
g(\UD)$. An exposition on the widely used theory of differential
subordination, developed in the main by Miller and Mocanu, with
numerous applications to univalent functions can be found in their
monograph \cite{miller}. Miller and Mocanu \cite{Miller2003} also
introduced the dual concept of differential superordination. Let $p,
h \in \mathcal{ H}$ and $\phi(r,s,t;z):\mathbb{ C}^3\times \UD
\rightarrow \mathbb{ C}$. If $p$ and $\phi(p(z),zp'(z),z^2p''(z);z)$
are univalent and $p$ satisfies the second-order superordination
\begin{equation}\label{super}
h(z)\prec \phi(p(z),zp'(z),z^2p''(z);z),\end{equation} then $p$ is a
solution of the differential superordination (\ref{super}). An
analytic function $q$ is called a \emph{ subordinant} if $q\prec p$
for all $p$ satisfying (\ref{super}). A univalent subordinant
$\widetilde{q}$ satisfying $q\prec \widetilde{q}$ for all
subordinants $q$ of (\ref{super}) is said to be the best
subordinant. Miller and Mocanu \cite{Miller2003} obtained conditions
on $h$, $q$ and $\phi$ for which the following differential
implication holds:
\[ h(z)\prec \phi(p(z),zp'(z),z^2p''(z);z) \Rightarrow q(z)\prec p(z).\]
Using these results, Bulboac\u a gave a treatment on certain classes
of first-order differential superordinations \cite{Bul1,Bul2002b},
as well as superordination-preserving integral operators
\cite{Bul2002}. Ali {\em et al.} \cite{ali} gave several
applications of first-order differential subordination and
superordination to obtain sufficient conditions for normalized
analytic functions $f$ to satisfy $q_1(z)\prec zf'(z)/f(z)\prec
q_2(z)$,  where $q_1$ and $q_2$ are given univalent analytic
functions in $\UD $. In \cite{ali2} they have also applied
differential superordination to functions defined by means of linear
operators. Recently Ali and Ravichandran \cite{ali3} investigated
first-order superordination to a class of meromorphic
$\alpha$-convex functions. Several differential subordination and
superordination associated with various linear operators were also
investigated in \cite{ros5}.

Generalizing the familiar starlike  and convex functions,
Lewandowski {\em et al.}\ \cite{lewan1} introduced $\gamma$-starlike
functions consisting of $f\in \mathcal{A}$ satisfying the inequality
\[  \RE\left( \left( \frac{zf'(z)}{f(z)} \right)^{1-\gamma}
 \left( 1+\frac{zf''(z)}{f'(z)} \right)^{\gamma}\right)>0. \]
These functions are starlike.  With $p(z):=zf'(z)/f(z)$, to show
that $\gamma$-starlike functions are indeed starlike is to
analytically  make the implication
\[ \RE\left(  p(z)\left(1+\frac{zp'(z)}{p^2(z)}
\right)^\gamma \right) >0 \Rightarrow \RE p(z)>0. \]
 Following the work of
Lewandowski {\em et al.}\ \cite{lewan1,lewan2}, Kanas {\em et al.}\
\cite{kanas1} determined conditions on $p$ and $h$ satisfying \[
p(z) \left( 1+\frac{zp'(z)}{p(z)}\right)^\alpha\prec h(z)
\Rightarrow p(z)\prec h(z)\] for a fixed $ \alpha\in [0,1]$.  Lecko
\cite{lecko2} (see Kanas {\em et al.}  \cite{kanas1} for a symmetric
version) investigated the more general subordination
\[ p(z)\left( 1+\frac{zp'(z)}{p(z)}\varphi(p(z))\right)^\alpha\prec h(z)
\Rightarrow p(z)\prec h(z).\] Singh and Gupta \cite{susma}
subsequently investigated the following first-order differential
subordination that included the important Briot-Bouquet differential
subordination:
\[ (p(z))^\alpha \left( p(z) + \frac{zp'(z)}{\beta
p(z)+\gamma} \right)^\mu \prec  (q(z))^\alpha \left( q(z) +
\frac{zq'(z)}{\beta q(z)+\gamma} \right)^\mu  \Rightarrow p(z)\prec
q(z). \] For a closely  related  class, see   S. Kanas\ and\ J.
Kowalczyk \cite{kanas0}.

The present paper investigates differential subordination and
superordination implications of expressions similar to the form
considered above by Singh and Gupta \cite{susma}. Special cases of
the results obtained include one involving the expresssion $\alpha
p^2(z) +(1-\alpha)p(z) + \alpha z p'(z)$, a result which cannot be
deduced from the work of Singh and Gupta \cite{susma}. The
sandwich-type results obtained in our present investigation are then
applied to normalized analytic univalent functions and to
$\Phi$-like functions.

The following definition and  results will be required:

\begin{lemma}[cf. Miller and Mocanu \protect{\cite[Theorem 3.4h, p.132]{miller}}]
\label{lem1} Let $q$ be univalent in the unit disk $\UD$, and let
$\vartheta$ and $\varphi$ be analytic in a domain $D \supset q(\UD)$
with $\varphi(w)\neq 0,$ $w \in q(\UD).$ With $
Q(z):=zq'(z)\varphi(q(z))$, let $ h(z):=\vartheta(q(z))+Q(z)$.
Suppose that  $Q$ is starlike univalent in $\UD $ and
\[{\RE\;}\left(\frac{zh'(z)}{Q(z)}\right)>0 \quad (z\in \UD).\]
 If $p$ is
analytic in $\UD $ with $p(0)=q(0)$, $p(\UD )\subset D$ and
\begin{equation*} \label{eq8}
\vartheta(p(z))+zp'(z)\varphi(p(z))\prec
\vartheta(q(z))+zq'(z)\varphi(q(z)),
\end{equation*} then $p(z)\prec q(z),$ and
$q$ is the best dominant.
\end{lemma}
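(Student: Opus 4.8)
The plan is to read the hypothesis as a first-order differential subordination and to verify that $\psi(r,s):=\vartheta(r)+s\,\varphi(r)$ is an \emph{admissible} function in the sense of Miller and Mocanu, so that $p\prec q$ follows from the fundamental admissibility theorem. Writing $h(z)=\psi(q(z),zq'(z))=\vartheta(q(z))+Q(z)$, the hypothesis becomes $\psi(p(z),zp'(z))\prec h(z)$, and the initial values match since $p(0)=q(0)$ gives $\psi(p(0),0)=\vartheta(q(0))=h(0)$. I first observe that the standing assumptions force $h$ to be univalent: the requirement $\RE(zh'(z)/Q(z))>0$ together with $Q$ starlike univalent (and $Q(0)=0$) says exactly that $h$ is close-to-convex with respect to $Q$, hence univalent, so $h(\UD)$ is a genuine simply connected domain and the subordination is meaningful.

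The heart of the proof is an argument by contradiction built on the boundary behaviour of $p$ relative to $q$. Suppose $p\not\prec q$. Since $q$ is univalent with $p(0)=q(0)$, the Miller--Mocanu boundary-point lemma provides points $z_0\in\UD$, $\zeta_0\in\partial\UD$ and a number $m\ge 1$ with $p(z_0)=q(\zeta_0)$ and $z_0p'(z_0)=m\,\zeta_0 q'(\zeta_0)$. Feeding these into $\psi$ and using $Q(\zeta_0)=\zeta_0 q'(\zeta_0)\varphi(q(\zeta_0))$ yields
\[
\psi\bigl(p(z_0),z_0p'(z_0)\bigr)=\vartheta(q(\zeta_0))+m\,\zeta_0 q'(\zeta_0)\varphi(q(\zeta_0))=h(\zeta_0)+(m-1)Q(\zeta_0).
\]
If I can show this value lies outside $h(\UD)$, it contradicts $\psi(p(z),zp'(z))\prec h(z)$, and the contradiction establishes $p\prec q$.

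I expect this last geometric step---that $h(\zeta_0)+(m-1)Q(\zeta_0)\notin h(\UD)$ for every $m\ge 1$---to be the main obstacle. The case $m=1$ is immediate because $h(\zeta_0)$ is a boundary value. For $m>1$ the point is obtained from the boundary point $h(\zeta_0)$ by moving a nonnegative distance in the direction $Q(\zeta_0)$, and the claim is that this ray does not re-enter the close-to-convex domain $h(\UD)$. I would prove it by comparing the tangent direction of the boundary curve $\theta\mapsto h(e^{i\theta})$, namely $i\,\zeta_0 h'(\zeta_0)$, with the direction $Q(\zeta_0)$ via the identity $\zeta_0 h'(\zeta_0)=\bigl(zh'(z)/Q(z)\bigr)\big|_{z=\zeta_0}\,Q(\zeta_0)$; the hypothesis $\RE(zh'/Q)>0$ pins the outward direction $Q(\zeta_0)$ to the correct side of the tangent, forcing the ray out of the domain. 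This is precisely the linear-accessibility property enjoyed by close-to-convex domains.

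Finally, $q$ is the best dominant: since $\psi(q(z),zq'(z))=h(z)\prec h(z)$, the function $q$ is itself a solution of the subordination, so $q\prec\widetilde q$ for every dominant $\widetilde q$; being univalent, $q$ is therefore the best dominant.
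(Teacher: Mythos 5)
The paper itself offers no proof of this lemma: it is quoted from Miller and Mocanu's monograph (Theorem 3.4h), so the relevant comparison is with that classical proof, whose skeleton your proposal correctly reproduces. Assuming $p\not\prec q$, invoking the Jack--Miller--Mocanu boundary-point lemma to obtain $z_0\in\UD$, $\zeta_0\in\partial\UD$, $m\geq 1$ with $p(z_0)=q(\zeta_0)$ and $z_0p'(z_0)=m\zeta_0 q'(\zeta_0)$, computing $\vartheta(p(z_0))+z_0p'(z_0)\varphi(p(z_0))=h(\zeta_0)+(m-1)Q(\zeta_0)$, and contradicting the subordination by showing this value escapes $h(\UD)$ is exactly the standard argument; your observation that $h$ is close-to-convex (hence univalent), the initial-value check, and the best-dominant argument at the end are all sound.

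The genuine gap is in the escape step, precisely where you predicted the obstacle. Your tangent-direction computation is purely local: knowing that $Q(\zeta_0)$ points to the exterior side of the tangent to the boundary curve at $h(\zeta_0)$ shows the ray leaves $h(\UD)$ initially, but it does not prevent the ray $\{h(\zeta_0)+tQ(\zeta_0): t\geq 0\}$ from re-entering the domain later; $h(\UD)$ is only close-to-convex, not convex, and an outward ray from a boundary point of a non-convex domain can certainly return. The appeal to linear accessibility does not close this hole either: Lewandowski's characterization is existential (the complement of a close-to-convex domain is \emph{some} union of half-lines), whereas you need that \emph{these particular} rays lie in the complement. The rigorous route---and the one Miller and Mocanu take---is to verify that $L(z,t):=h(z)+tQ(z)$ is a subordination (Loewner) chain via Pommerenke's criterion: one has $\RE\bigl(z\,\partial L/\partial z\,\big/\,\partial L/\partial t\bigr)=\RE\bigl(zh'(z)/Q(z)\bigr)+t\,\RE\bigl(zQ'(z)/Q(z)\bigr)>0$ by your two standing hypotheses, whence $h(\UD)=L(\UD,0)\subset L(\UD,m-1)$, while $h(\zeta_0)+(m-1)Q(\zeta_0)=L(\zeta_0,m-1)$ is a boundary point of the univalent image $L(\UD,m-1)$ and therefore lies outside $h(\UD)$. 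A second, smaller omission: the boundary-point lemma, and indeed the very existence of the boundary values $q(\zeta_0)$, $Q(\zeta_0)$, $h(\zeta_0)$, require $q$ to behave well on $\partial\UD$ (membership in the class $\mathcal{Q}$), which a general univalent $q$ need not satisfy; the standard fix is to run the entire argument for $q_\rho(z):=q(\rho z)$ with $\rho<1$ and let $\rho\rightarrow 1^{-}$, a limiting step your write-up omits.
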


\begin{definition}[\protect{\cite[Definition 2, p. 817]{Miller2003}}]
Denote by $\mathcal{Q} $ the set of all functions $f$ that are
analytic and injective on $\overline{\UD }-E(f)$, where
\[ E(f)=\{\zeta \in\partial \UD : \lim_{z\rightarrow \zeta}
f(z)=\infty \},\]  and are such that $f'(\zeta)\not=0$ for
$\zeta\in\partial \UD -E(f)$.
\end{definition}

\begin{lemma}[\protect{\cite{Bul2002b}}]
\label{lem2}
 Let $q$ be univalent in the unit disk
$\UD $, $\vartheta$ and $\varphi$ be analytic in a domain $D$
containing $q(\UD )$. Suppose that
$\RE\left[\vartheta'(q(z))/\varphi(q(z))\right]> 0
 \text{ for } z\in \UD $ and
 $zq'(z)\varphi(q(z))$ is starlike univalent in $\UD $.
If $p \in \mathcal{H}[q(0),1]\cap \mathcal{Q}$ with $p(\UD
)\subseteq D$, and $\vartheta(p(z))+zp'(z)\varphi(p(z))$ is
univalent in $\UD $, then
\begin{equation*}
 \label{lem2s1}
\vartheta(q(z))+zq'(z)\varphi(q(z)) \prec
\vartheta(p(z))+zp'(z)\varphi(p(z))
\end{equation*}
implies  $q(z)\prec p(z),$  and $q$ is the best subordinant.
\end{lemma}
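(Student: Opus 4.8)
The plan is to recognize this statement as the superordination dual of Lemma \ref{lem1} and to prove it through the Miller--Mocanu duality between differential subordination and superordination. Throughout I would write
\[ h(z) := \vartheta(q(z)) + zq'(z)\varphi(q(z)), \qquad \phi(z) := \vartheta(p(z)) + zp'(z)\varphi(p(z)), \]
and set $Q(z) := zq'(z)\varphi(q(z))$, so that $h = \vartheta(q) + Q$. The hypothesis to exploit is the subordination $h(z) \prec \phi(z)$ with $\phi$ univalent, and the goal is $q \prec p$.

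First I would record that the data $(\vartheta,\varphi,q)$ already satisfy the hypotheses of Lemma \ref{lem1}. Since $Q$ is starlike univalent, $\RE\left(zQ'(z)/Q(z)\right) > 0$, and a direct differentiation (using $q'\neq0$ from the univalence of $q$) gives
\[ \frac{zh'(z)}{Q(z)} = \frac{\vartheta'(q(z))}{\varphi(q(z))} + \frac{zQ'(z)}{Q(z)}. \]
Combined with the standing assumption $\RE\left[\vartheta'(q)/\varphi(q)\right] > 0$, this yields $\RE\left(zh'/Q\right) > 0$; hence $h$ is close-to-convex, so univalent, and $q$ is the best dominant in the companion subordination problem. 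This step ties the superordination data to Lemma \ref{lem1} and guarantees that $h(\UD) \subseteq \phi(\UD)$ with $h(0) = \phi(0) = \vartheta(q(0))$, the normalization following from $p(0) = q(0)$.

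The core of the argument is the duality step, which I would carry out by contradiction. Suppose $q \not\prec p$. Because $p \in \mathcal{H}[q(0),1]\cap \mathcal{Q}$ while $q$ is analytic with $q(0)=p(0)$, the Miller--Mocanu boundary lemma for functions in $\mathcal{Q}$ supplies points $z_0 \in \UD$, $\zeta_0 \in \partial\UD \setminus E(p)$, and $m \geq 1$ with
\[ q(z_0) = p(\zeta_0), \qquad z_0 q'(z_0) = \tfrac{1}{m}\,\zeta_0 p'(\zeta_0). \]
Substituting gives $h(z_0) = \vartheta(p(\zeta_0)) + \tfrac{1}{m}\zeta_0 p'(\zeta_0)\varphi(p(\zeta_0))$, to be compared with the boundary value $\phi(\zeta_0) = \vartheta(p(\zeta_0)) + \zeta_0 p'(\zeta_0)\varphi(p(\zeta_0)) \in \partial\phi(\UD)$. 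Using $m \geq 1$ together with the starlikeness of $Q$ and the positivity of $\RE\left[\vartheta'(q)/\varphi(q)\right]$, I would show that $h(z_0)$ lies outside the univalent image $\phi(\UD)$, contradicting $h(\UD)\subseteq \phi(\UD)$; hence $q \prec p$.

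The step I expect to be the main obstacle is precisely this boundary estimate: converting the \emph{interior} admissibility condition $\RE\left[\vartheta'(q)/\varphi(q)\right] > 0$ into the \emph{geometric} assertion that the perturbed value $h(z_0)$ escapes $\phi(\UD)$. The delicacy is that the starlike function is the $q$-version $Q$, whereas the boundary point involves $p$, so one must track the factor $\tfrac{1}{m}-1 \leq 0$ and the direction of $\zeta_0 p'(\zeta_0)\varphi(p(\zeta_0))$ with care (this is exactly where the dual admissibility class enters). Finally, that $q$ is the \emph{best} subordinant is immediate: taking $p=q$ makes the superordination $h \prec h$ hold trivially and $\phi=h$ univalent, so $q$ is an admissible solution; any subordinant $\widetilde q$ therefore satisfies $\widetilde q \prec q$, and since $q$ is univalent it is the best subordinant.
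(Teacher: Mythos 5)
First, a point of reference: the paper does not prove this lemma at all — it is imported verbatim from Bulboac\u a \cite{Bul2002b} — so your attempt can only be measured against the standard Miller--Mocanu superordination argument used there. Your opening step matches that argument and is correct: since $q$ is univalent ($q'\neq 0$) and $Q(z)=zq'(z)\varphi(q(z))$ is starlike, the identity $zh'(z)/Q(z)=\vartheta'(q(z))/\varphi(q(z))+zQ'(z)/Q(z)$ together with the two hypotheses gives $\RE\left(zh'(z)/Q(z)\right)>0$, so $h=\vartheta(q)+Q$ is close-to-convex, hence univalent, and the hypothesis $h\prec\phi$ with $\phi$ univalent yields $h(\UD)\subseteq\phi(\UD)$ and $h(0)=\phi(0)$.

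The core of your plan, however, has two genuine defects. (i) You misquote the boundary lemma: with $q$ analytic, $p\in\mathcal{Q}$ and $q\not\prec p$, Lemma A of \cite{Miller2003} (which is \cite[Lemma 2.2d]{miller} with the analytic function $q$ tested against the $\mathcal{Q}$-function $p$) gives $z_0\in\UD$, $\zeta_0\in\partial\UD\setminus E(p)$ and $m\geq 1$ with $q(z_0)=p(\zeta_0)$ and $z_0q'(z_0)=m\,\zeta_0p'(\zeta_0)$; you wrote $z_0q'(z_0)=\frac{1}{m}\zeta_0p'(\zeta_0)$, i.e.\ the factor $m$ on the wrong side. (ii) More seriously, your contradiction runs in a direction that cannot be realized: you want to show the interior value $h(z_0)$ lies \emph{outside} $\phi(\UD)$, but the hypotheses ($\RE\left[\vartheta'(q)/\varphi(q)\right]>0$ and starlikeness of $Q$) constrain $q$ and $h$ only — they give no information about the geometry of $\phi(\UD)$ near its boundary, where all we know is that $\phi$ is univalent. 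The workable argument is the reverse: with the correct relation, the boundary value satisfies $\phi(\zeta_0)=\vartheta(q(z_0))+\frac{1}{m}Q(z_0)$, and the decisive step — the one you explicitly defer with ``I would show'' — is the subordination $\vartheta(q(z))+\frac{1}{m}Q(z)\prec h(z)$ for every $m\geq 1$. This is exactly what the two displayed hypotheses are for: the function $L(z,t)=\vartheta(q(z))+tQ(z)$ satisfies $\RE\left[z\frac{\partial L}{\partial z}\big/\frac{\partial L}{\partial t}\right]=\RE\left[\vartheta'(q(z))/\varphi(q(z))\right]+t\RE\left[zQ'(z)/Q(z)\right]>0$ for $t\geq 0$, so by Pommerenke's subordination-chain criterion $L(\cdot,1/m)\prec L(\cdot,1)=h$. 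Then $\phi(\zeta_0)\in h(\UD)\subseteq\phi(\UD)$, contradicting the fact that a boundary limit of a univalent function cannot be an interior value of its image. Without this chain lemma your skeleton has no engine, and with the inverted boundary relation it cannot be completed as written. (Your best-subordinant paragraph is the standard one, modulo the technical point that taking $p=q$ requires $q\in\mathcal{Q}$, which univalence in $\UD$ alone does not guarantee; one usually passes to $q_r(z)=q(rz)$, $r\to 1^-$.)
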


\section{A Sandwich Theorem}

Our main result involves the following class of functions:

\begin{definition}
Let $\alpha$ and $\mu$ be fixed numbers with $0<\mu\leq 1$,
$\alpha+\mu\geq 0$. Also let $\beta$, $\gamma$ and $\delta$ be
complex numbers with $\beta\not=0$. The class $\mathcal{R}(\alpha,
\beta, \gamma,\delta,\mu)$ consists of analytic functions $p$ with
$p(0)=1$, $p(z)\not=0$ in $\UD,$ and are such that the functions
\[
P(z):=  (p(z))^\alpha \left( p(z)+\delta + \frac{zp'(z)}{\beta
p(z)+\gamma} \right)^\mu\quad (z\in \UD )
\]
are well-defined in $\UD $. (Here the powers are principal values.)
\end{definition}

By making use of Lemma~\ref{lem1}, the following result is derived:
\begin{theorem} \label{th1}
Let $q\in \mathcal{R}(\alpha, \beta, \gamma,\delta,\mu)$ be analytic
and univalent in $\UD $. Set
\begin{equation} \label{th2e1}
 R(z):= \frac{zq'(z)}{\beta q(z)+\gamma} \quad (z\in \UD ).
 \end{equation}
Assume that
\begin{equation} \label{th2e2}
 \RE\left( (\beta q(z)+\gamma)\left(1+\frac{\alpha}{\mu}
+\frac{\alpha\delta}{\mu q(z)}\right)  \right) > 0 \quad (z\in \UD
), \end{equation}
 and
\begin{equation} \label{th2e3} \RE\left(\frac{\alpha}{\mu}\frac{zq'(z)}{q(z)}
+\frac{zR'(z)}{R(z)}  \right) > 0 \quad (z\in \UD ).
\end{equation} If $p\in \mathcal{R}(\alpha, \beta,
\gamma,\delta,\mu)$ satisfies
\begin{equation}\label{sub1}
(p(z))^\alpha \left( p(z)+\delta + \frac{zp'(z)}{\beta p(z)+\gamma}
\right)^\mu \prec  (q(z))^\alpha \left( q(z)+\delta +
\frac{zq'(z)}{\beta q(z)+\gamma} \right)^\mu,
\end{equation}
then $p(z)\prec q(z),$ and $q$ is the best dominant.
\end{theorem}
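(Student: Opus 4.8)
The plan is to apply Lemma~\ref{lem1} directly by choosing the right functions $\vartheta$ and $\varphi$ so that the subordination hypothesis (\ref{sub1}) matches the abstract form $\vartheta(p(z))+zp'(z)\varphi(p(z))\prec\vartheta(q(z))+zq'(z)\varphi(q(z))$. The natural guess is to take $\vartheta(w):=w^\alpha(w+\delta)^\mu$ and to pick $\varphi$ so that the $\mu$-th power expression expands correctly. The obstacle, however, is that $P(z)$ involves the whole quantity $\bigl(p+\delta+zp'/(\beta p+\gamma)\bigr)^\mu$ raised to $\mu$ and multiplied by $p^\alpha$, which is \emph{not} of the additive form $\vartheta(p)+zp'\varphi(p)$. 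So the first real step is to reduce the power expression to the additive form, presumably by a first-order (in $zp'$) approximation or exact manipulation valid when the subordination is interpreted appropriately.

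\medskip

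First I would set $\vartheta(w):=w^\alpha(w+\delta)^\mu$ and compute $\vartheta'(w)$, then guess $\varphi$ by matching. A cleaner route, which I expect the authors take, is to observe that Lemma~\ref{lem1} as stated is about $\vartheta(p)+zp'\varphi(p)$, so I should find $\vartheta,\varphi$ making the identity
\[
\vartheta(w)+s\,\varphi(w)=w^\alpha\Bigl(w+\delta+\frac{s}{\beta w+\gamma}\Bigr)^\mu
\]
hold at least to the order needed; but since the right-hand side is genuinely nonlinear in $s=zp'$, one cannot expect an exact match. The resolution is to write $P(z)=\vartheta(p(z))\cdot\bigl(1+\text{(small)}\bigr)^\mu$ where the bracketed term carries the $zp'$ dependence, and then verify that the correct choice is
\[
\vartheta(w):=w^\alpha(w+\delta)^\mu,\qquad
\varphi(w):=\frac{\mu\,w^\alpha(w+\delta)^{\mu-1}}{\beta w+\gamma}.
\]
With these, $\vartheta(q)+zq'\varphi(q)$ is exactly the first-order part of $P$, and one checks that the subordination (\ref{sub1}) coincides with the Lemma's hypothesis. (I would double-check whether the theorem's $P$ is genuinely the linearization or whether the stated $\varphi$ reproduces $P$ exactly; the structure of conditions (\ref{th2e2}) and (\ref{th2e3}) strongly suggests the above $\varphi$.)

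\medskip

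Next I would verify the two hypotheses of Lemma~\ref{lem1} with these choices. Setting $Q(z):=zq'(z)\varphi(q(z))=\mu\,(q(z))^\alpha(q(z)+\delta)^{\mu-1}R(z)$ where $R$ is as in (\ref{th2e1}), I would compute $zQ'(z)/Q(z)$ and show it has positive real part—this is the starlikeness of $Q$. Taking the logarithmic derivative, $zQ'/Q=\alpha\,zq'/q+(\mu-1)zq'(q+\delta)^{-1}\cdot q'\text{-terms}+zR'/R$, and I expect condition (\ref{th2e3}) to be precisely what guarantees $\RE(zQ'/Q)>0$ after the $(q+\delta)$ contribution is absorbed. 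Then, with $h(z):=\vartheta(q(z))+Q(z)$, I would compute $zh'(z)/Q(z)$ and show $\RE\bigl(zh'/Q\bigr)>0$; expanding $zh'=z\vartheta'(q)q'+zQ'$ and dividing by $Q$ should produce a term $\vartheta'(q)/\varphi(q)=(\beta q+\gamma)\bigl(\alpha/(\mu q)\cdot(q+\delta)+1\bigr)\cdot\mu^{-1}\cdot\text{(rescale)}$, which is exactly the expression appearing in (\ref{th2e2}).

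\medskip

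The main obstacle I anticipate is the bookkeeping in reconciling the nonlinear power structure of $P$ with the additive template of the Lemma, i.e.\ confirming that the stated $\varphi$ reproduces $P$ exactly rather than only to first order; this requires carefully factoring out $(q+\delta)^{\mu-1}$ and verifying that the $\mu$-th power expands cleanly. Once the correspondence $\bigl(\vartheta(p)+zp'\varphi(p)\bigr)=P(z)$ is pinned down, the two positivity conditions (\ref{th2e2}) and (\ref{th2e3}) are manufactured to yield exactly the two hypotheses $\RE(zh'/Q)>0$ and the starlikeness of $Q$, so the conclusion $p\prec q$ with $q$ the best dominant follows immediately from Lemma~\ref{lem1}. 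The final routine check is that $p(0)=q(0)=1$ and $p(\UD)\subset D$, both of which are built into membership in $\mathcal{R}(\alpha,\beta,\gamma,\delta,\mu)$.
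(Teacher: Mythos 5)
There is a genuine gap, and you identified it yourself without closing it: your choices $\vartheta(w)=w^\alpha(w+\delta)^\mu$ and $\varphi(w)=\mu w^\alpha(w+\delta)^{\mu-1}/(\beta w+\gamma)$ only reproduce $P$ \emph{to first order} in $s=zp'$; the identity $\vartheta(w)+s\varphi(w)=w^\alpha\bigl(w+\delta+\tfrac{s}{\beta w+\gamma}\bigr)^{\mu}$ is false for $\mu\neq 1$. Lemma~\ref{lem1} requires the hypothesis to be \emph{exactly} the subordination $\vartheta(p)+zp'\varphi(p)\prec\vartheta(q)+zq'\varphi(q)$, and subordination of the nonlinear expression (\ref{sub1}) does not imply subordination of its linearization, so your application of the lemma never gets off the ground. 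The missing idea — which is the paper's first (and decisive) step — is to take the $\mu$-th root of both sides of (\ref{sub1}): if $F\prec G$ then $F=G\circ w$ for a Schwarz function $w$, hence $F^{1/\mu}=G^{1/\mu}\circ w$ with principal powers (well-defined by membership in $\mathcal{R}(\alpha,\beta,\gamma,\delta,\mu)$), so (\ref{sub1}) becomes
\begin{equation*}
(p(z))^{\frac{\alpha}{\mu}+1}+\delta (p(z))^{\frac{\alpha}{\mu}}
+(p(z))^{\frac{\alpha}{\mu}}\frac{zp'(z)}{\beta p(z)+\gamma}
\prec
(q(z))^{\frac{\alpha}{\mu}+1}+\delta (q(z))^{\frac{\alpha}{\mu}}
+(q(z))^{\frac{\alpha}{\mu}}\frac{zq'(z)}{\beta q(z)+\gamma},
\end{equation*}
which \emph{is} exactly additive, with $\vartheta(w)=w^{\frac{\alpha}{\mu}+1}+\delta w^{\frac{\alpha}{\mu}}$ and $\varphi(w)=w^{\frac{\alpha}{\mu}}/(\beta w+\gamma)$.

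With the paper's choices everything clicks: $Q(z)=zq'(z)\varphi(q(z))=(q(z))^{\frac{\alpha}{\mu}}R(z)$, so $zQ'/Q=\tfrac{\alpha}{\mu}\tfrac{zq'}{q}+\tfrac{zR'}{R}$ is precisely the quantity in (\ref{th2e3}), and $zh'/Q=\vartheta'(q)/\varphi(q)+zQ'/Q$ is precisely the sum of the quantities in (\ref{th2e2}) and (\ref{th2e3}). By contrast, your $\varphi$ gives $Q=\mu\,q^\alpha(q+\delta)^{\mu-1}R$, whose logarithmic derivative is $\alpha\tfrac{zq'}{q}+(\mu-1)\tfrac{zq'}{q+\delta}+\tfrac{zR'}{R}$; this is \emph{not} controlled by (\ref{th2e3}) (note the factor $\alpha$ instead of $\alpha/\mu$ and the extraneous $(q+\delta)$ term), so even the starlikeness verification in your plan fails. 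It is a curiosity that your linearized pair happens to give the same ratio $\vartheta'/\varphi=(\beta w+\gamma)\bigl(1+\tfrac{\alpha}{\mu}+\tfrac{\alpha\delta}{\mu w}\bigr)$ as the paper's, matching (\ref{th2e2}), but that coincidence cannot rescue the argument: without the exact additive reduction, Lemma~\ref{lem1} simply does not apply to the hypothesis you were given.
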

\begin{proof}
We first write the differential subordination (\ref{sub1}) as
\[ (p(z))^{\frac{\alpha}{\mu}+1}  + \delta (p(z))^{\frac{\alpha}{\mu}} +
(p(z))^{\frac{\alpha}{\mu}}  \frac{zp'(z)}{\beta p(z)+\gamma} \prec
(q(z))^{\frac{\alpha}{\mu}+1}  + \delta (q(z))^{\frac{\alpha}{\mu}}
+ (q(z))^{\frac{\alpha}{\mu}} \frac{zq'(z)}{\beta q(z)+\gamma}.  \]
Define the functions $\vartheta$ and $\varphi$ by
\[\vartheta(w):= w^{\frac{\alpha}{\mu}+1} +\delta w^{\frac{\alpha}{\mu}} \text{ and }
\varphi(w):=\frac{w^{\frac{\alpha}{\mu}}}{\beta w+\gamma}.   \]
Since $q\in \mathcal{R}(\alpha, \beta, \gamma,\delta,\mu)$, then
$q(z)\not=0$ and therefore $\varphi(w)\not=0$ when $w\in q(\UD )$.
Also $\varphi$ and $\vartheta$ are analytic in a domain containing
$q(\UD )$. Define the function
\[ Q(z):= zq'(z)\varphi(q(z))=
(q(z))^{\frac{\alpha}{\mu}} \frac{zq'(z)}{\beta q(z)+\gamma} =
(q(z))^{\frac{\alpha}{\mu}} R(z), \] where $R$ is given by
(\ref{th2e1}). It follows from (\ref{th2e3}) that
\[ \RE \frac{zQ'(z)}{Q(z)} = \Re\left( \frac{\alpha}{\mu}\frac{zq'(z)}{q(z)}
+\frac{zR'(z)}{R(z)}  \right) >0,  \] and so $Q$ is a starlike
function. Now define $h$ by
\[ h(z):= \vartheta(q(z)) + Q(z)= (q(z))^{\frac{\alpha}{\mu}+1}  + \delta
(q(z))^{\frac{\alpha}{\mu}}+Q(z).\] In view of  the assumptions
(\ref{th2e2}) and (\ref{th2e3}), it follows that
\[ \RE\frac{zh'(z)}{Q(z)}
= \RE\left\{ (\beta q(z)+\gamma)\left(1+\frac{\alpha}{\mu}
+\frac{\alpha\delta}{\mu q(z)}\right)
+\frac{\alpha}{\mu}\frac{zq'(z)}{q(z)} +\frac{zR'(z)}{R(z)} \right\}
> 0 \quad (z\in \UD ).  \] The result is now deduced from
Lemma~\ref{lem1}.
\end{proof}

\begin{example} Let $q:\mathbb{D}\rightarrow \mathbb{C}$  be defined by
$q(z)=(1+Az)/(1+Bz)$ with $-1 < B < A\leq 1$. It is evident that
$q\in \mathcal{R}(\alpha, \beta, \gamma,\delta,\mu)$ whenever \[
\delta+\frac{1-A}{1-B} > \frac{A-B}{(1-B) |\,|\beta+\gamma|-|\beta
A+\gamma B)|\,|}.\] With additional constraints on the parameters,
there exists functions $q$ satisfying the hypothesis of
Theorem~\ref{th1}. For instance, in addition to the above condition,
assuming  all the parameters $\alpha, \beta, \gamma,\delta,$ and
$\mu$ are positive with \[ \frac{1-2A}{1-A} > \frac{|\beta A +\gamma
B|}{|\beta+\gamma-|\beta A +\gamma B|},
\]  then $q$ satisfies the conditions of Theorem~\ref{th1}.
\end{example}

By a similar  application of Lemma~\ref{lem2}, the following result
can be established, which we state without proof.

\begin{theorem}\label{th2}
Let $q\in \mathcal{R}(\alpha, \beta, \gamma,\delta,\mu)$ be  as in
Theorem~\ref{th1}.  Let $p\in \mathcal{R}(\alpha, \beta,
\gamma,\delta,\mu)$ satisfies $p\in \mathcal{H}\cap \mathcal{Q}$ and
$(p(z))^{\frac{\alpha}{\mu}+1}  + \delta (p(z))^{\frac{\alpha}{\mu}}
+ (p(z))^{\frac{\alpha}{\mu}} \frac{zp'(z)}{\beta p(z)+\gamma}$ be
univalent. If $p$ satisfies
\begin{equation*}\label{sub2}
(q(z))^\alpha \left( q(z)+\delta + \frac{zq'(z)}{\beta q(z)+\gamma}
\right)^\mu \prec  (p(z))^\alpha \left( p(z)+\delta +
\frac{zp'(z)}{\beta p(z)+\gamma} \right)^\mu,
\end{equation*}
then $q(z)\prec p(z),$ and $q$ is the best subordinant.
\end{theorem}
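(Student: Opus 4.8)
The plan is to follow the proof of Theorem~\ref{th1} line by line, merely substituting the subordinant version Lemma~\ref{lem2} for the dominant version Lemma~\ref{lem1}. First I would recast the superordination hypothesis in the equivalent form
\[
\vartheta(q(z))+zq'(z)\varphi(q(z)) \prec \vartheta(p(z))+zp'(z)\varphi(p(z)),
\]
employing the same auxiliary functions
\[
\vartheta(w):= w^{\frac{\alpha}{\mu}+1}+\delta w^{\frac{\alpha}{\mu}},
\qquad
\varphi(w):=\frac{w^{\frac{\alpha}{\mu}}}{\beta w+\gamma}
\]
used for Theorem~\ref{th1}. Since $q\in\mathcal{R}(\alpha,\beta,\gamma,\delta,\mu)$ does not vanish, $\vartheta$ and $\varphi$ are analytic and $\varphi\neq 0$ on a domain $D$ containing $q(\UD)$, precisely as before.

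Next I would check the hypotheses of Lemma~\ref{lem2}. The starlike univalence of $Q(z):=zq'(z)\varphi(q(z))=(q(z))^{\alpha/\mu}R(z)$ follows from~(\ref{th2e3}) via the identity $\RE\left(zQ'(z)/Q(z)\right)=\RE\left(\frac{\alpha}{\mu}\frac{zq'(z)}{q(z)}+\frac{zR'(z)}{R(z)}\right)>0$, exactly the computation already carried out in Theorem~\ref{th1}. The one new ingredient is the condition $\RE\left[\vartheta'(q(z))/\varphi(q(z))\right]>0$, which I would verify by the direct evaluation
\[
\frac{\vartheta'(w)}{\varphi(w)}
=(\beta w+\gamma)\left(1+\frac{\alpha}{\mu}+\frac{\alpha\delta}{\mu w}\right);
\]
setting $w=q(z)$ shows this is nothing but the quantity whose real part is assumed positive in~(\ref{th2e2}).

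The remaining requirements of Lemma~\ref{lem2} are exactly the extra standing assumptions of the theorem: $p\in\mathcal{H}\cap\mathcal{Q}$ (and since $p(0)=q(0)=1$ one has $p\in\mathcal{H}[q(0),1]\cap\mathcal{Q}$) together with the univalence in $\UD$ of the expression $\vartheta(p(z))+zp'(z)\varphi(p(z))$. With all hypotheses in hand, Lemma~\ref{lem2} delivers $q(z)\prec p(z)$ with $q$ the best subordinant. I expect no real obstacle, since the statement is strictly dual to the already-established Theorem~\ref{th1}; the only points meriting attention are the algebraic identification of $\vartheta'(q)/\varphi(q)$ with the left-hand side of~(\ref{th2e2}) and the observation that Lemma~\ref{lem2} uses~(\ref{th2e2}) by itself, rather than combined with~(\ref{th2e3}) as in the proof of Theorem~\ref{th1}.
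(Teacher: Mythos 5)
Your proposal is correct and is exactly the argument the paper intends: the paper states Theorem~\ref{th2} without proof, noting only that it follows ``by a similar application of Lemma~\ref{lem2},'' and your verification---reusing $\vartheta(w)=w^{\frac{\alpha}{\mu}+1}+\delta w^{\frac{\alpha}{\mu}}$ and $\varphi(w)=w^{\frac{\alpha}{\mu}}/(\beta w+\gamma)$, deriving starlikeness of $Q$ from (\ref{th2e3}), and identifying $\vartheta'(q(z))/\varphi(q(z))$ with the quantity in (\ref{th2e2})---is precisely that application. Your closing observation that Lemma~\ref{lem2} needs (\ref{th2e2}) alone, rather than the sum of (\ref{th2e2}) and (\ref{th2e3}) used for $\RE\bigl(zh'(z)/Q(z)\bigr)$ in Theorem~\ref{th1}, is also accurate.
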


Combining Theorems~\ref{th1} and \ref{th2}, the following ``sandwich
theorem'' is obtained:

\begin{theorem}\label{th3}
Let $q_i\in \mathcal{R}(\alpha, \beta, \gamma,\delta,\mu)$ $(i=1,2)$
be analytic and univalent in $\UD $. Set
\[ R_i(z):=  \frac{zq_i'(z)}{\beta q_i(z)+\gamma} \quad (i=1,2; z\in \UD ),\]
\[ h_i(z):= (q_i(z))^\alpha \left( q_i(z)+\delta + \frac{zq_i'(z)}{\beta
q_i(z)+\gamma} \right)^\mu \quad (i=1,2).\] Assume that
\[ \RE\left( (\beta q_i(z)+\gamma)\left(1+\frac{\alpha}{\mu}
+\frac{\alpha\delta}{\mu q_i(z)} \right) \right) > 0 \quad (z\in \UD
) \] and
\[ \RE\left(\frac{\alpha}{\mu}\frac{zq_i'(z)}{q_i(z)}
+\frac{zR_i'(z)}{R_i(z)}  \right) >0 \quad (i=1,2; z\in \UD ).
\] If $p\in \mathcal{R}(\alpha, \beta, \gamma,\delta,\mu)$ satisfies $p\in
\mathcal{H}\cap \mathcal{Q}$ and $(p(z))^{\frac{\alpha}{\mu}+1}  +
\delta (p(z))^{\frac{\alpha}{\mu}} + (p(z))^{\frac{\alpha}{\mu}}
\frac{zp'(z)}{\beta p(z)+\gamma}$ is univalent, then
\begin{equation}\label{sub3}
h_1(z)  \prec  (p(z))^\alpha \left( p(z)+\delta +
\frac{zp'(z)}{\beta p(z)+\gamma} \right)^\mu \prec h_2(z)
\end{equation} implies $q_1(z)\prec p(z)
\prec q_2(z).$ Further $q_1$ and $q_2$ are respectively the  best
subordinant and the best dominant.
\end{theorem}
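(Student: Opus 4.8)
The plan is to derive the sandwich conclusion by applying the two one-sided results already in hand: Theorem~\ref{th1} supplies the dominant (right-hand) half, and Theorem~\ref{th2} supplies the subordinant (left-hand) half. The crucial observation is that the double subordination~(\ref{sub3}) splits into two independent subordination chains, each of which coincides verbatim with the hypothesis of one of these theorems, so that no new analysis is required beyond correctly matching the hypotheses.

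First I would isolate the right-hand subordination in~(\ref{sub3}),
\[ (p(z))^\alpha \left( p(z)+\delta + \frac{zp'(z)}{\beta p(z)+\gamma} \right)^\mu \prec h_2(z), \]
and recall that $h_2(z)=(q_2(z))^\alpha\bigl(q_2(z)+\delta + zq_2'(z)/(\beta q_2(z)+\gamma)\bigr)^\mu$. This is exactly the subordination~(\ref{sub1}) of Theorem~\ref{th1} with $q$ taken to be $q_2$. The hypotheses that Theorem~\ref{th1} imposes on its dominant---membership in $\mathcal{R}(\alpha,\beta,\gamma,\delta,\mu)$, analyticity and univalence, and the two real-part inequalities---are precisely the $i=2$ instances of the assumptions listed in Theorem~\ref{th3}. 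Since $p\in\mathcal{R}(\alpha,\beta,\gamma,\delta,\mu)$, Theorem~\ref{th1} applies and yields $p(z)\prec q_2(z)$ with $q_2$ the best dominant.

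Next I would isolate the left-hand subordination,
\[ h_1(z) \prec (p(z))^\alpha \left( p(z)+\delta + \frac{zp'(z)}{\beta p(z)+\gamma} \right)^\mu, \]
where $h_1(z)=(q_1(z))^\alpha\bigl(q_1(z)+\delta + zq_1'(z)/(\beta q_1(z)+\gamma)\bigr)^\mu$. This is the superordination hypothesis of Theorem~\ref{th2} with $q$ taken to be $q_1$. The additional requirements carried in Theorem~\ref{th3}---that $p\in\mathcal{H}\cap\mathcal{Q}$ and that the expanded expression $(p(z))^{\alpha/\mu+1}+\delta(p(z))^{\alpha/\mu}+(p(z))^{\alpha/\mu}\,zp'(z)/(\beta p(z)+\gamma)$ be univalent---are exactly those Theorem~\ref{th2} demands of $p$, while the $i=1$ instances of the real-part inequalities furnish the hypotheses on $q_1$. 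Theorem~\ref{th2} therefore gives $q_1(z)\prec p(z)$ with $q_1$ the best subordinant. Concatenating the two conclusions produces $q_1(z)\prec p(z)\prec q_2(z)$, with $q_1$ and $q_2$ respectively the best subordinant and the best dominant.

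Because the argument merely glues together the two previously proved theorems, there is no substantive analytic obstacle. The only matter needing care is the bookkeeping: one must keep the roles of $q_1$ and $q_2$ straight---$q_1$ governs the subordinant side and $q_2$ the dominant side---and verify that each indexed hypothesis of Theorem~\ref{th3} is routed to the correct requirement in Theorems~\ref{th1} and~\ref{th2}, noting in particular that the stronger assumptions on $p$ (lying in $\mathcal{H}\cap\mathcal{Q}$ and univalence of the expanded expression) are needed only for the superordination half.
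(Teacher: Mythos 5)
Your proposal is correct and matches the paper's own argument: the paper obtains Theorem~\ref{th3} precisely by combining Theorem~\ref{th1} (applied with $q=q_2$ for the dominant half) and Theorem~\ref{th2} (applied with $q=q_1$ for the subordinant half), exactly as you do. Your routing of the hypotheses---including the observation that the conditions $p\in\mathcal{H}\cap\mathcal{Q}$ and univalence of the expanded expression are needed only for the superordination half---is accurate.
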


\section{Applications to Univalent Functions}

By use of Theorem~\ref{th3}, the following result is obtained:
\begin{theorem}\label{th4}
Let $\alpha$, $\mu$ be fixed numbers with $0<\mu\leq 1$, $\alpha+\mu
> 0$, and $\lambda \in\mathbb{C}$. Let $f, g\in\mathcal{A}$, and
$q_i(z)=zg_i'(z)/g_i(z)$ $(i=1,2)$ be univalent in $\UD $ satisfying
\[ \RE\left( \frac{1}{\lambda} q_i(z) \right) > 0 \]
and
\[  \RE\left( \left( \frac{\alpha}{\mu} -1\right) \frac{zq_i'(z)}{q_i(z)}
+ 1+ \frac{zq_i''(z)}{q_i'(z)}  \right) > 0 .  \] Let \[ h_i(z) :=
\left(\frac{zg_i'(z)}{g_i(z)}\right)^\alpha \left(
(1-\lambda)\frac{zg_i'(z)}{g_i(z)} + \lambda
\left(1+\frac{zg_i''(z)}{g_i'(z)}\right)\right)^\mu \quad (i=1,2).
\] If $f\in\mathcal{A}$ satisfies $0 \neq \frac{zf'(z)}{f(z)}\in
\mathcal{H}[1,1]\cap \mathcal{Q}$ and
$\left(\frac{zf'(z)}{f(z)}\right)^\alpha \left(
(1-\lambda)\frac{zf'(z)}{f(z)} + \lambda
\left(1+\frac{zf''(z)}{f'(z)}\right)   \right)^\mu$ is univalent in
$\UD $, then
\[
h_1(z) \prec \left(\frac{zf'(z)}{f(z)}\right)^\alpha \left(
(1-\lambda)\frac{zf'(z)}{f(z)} + \lambda
\left(1+\frac{zf''(z)}{f'(z)}\right)   \right)^\mu \prec h_2(z)\]
implies
\[
\frac{zg_1'(z)}{g_1(z)}\prec  \frac{zf'(z)}{f(z)}\prec
\frac{zg_2'(z)}{g_2(z)}.
 \]
\end{theorem}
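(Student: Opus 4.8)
The plan is to deduce Theorem~\ref{th4} directly from the sandwich theorem, Theorem~\ref{th3}, by recognizing $zf'/f$ as the generic function $p$ and making the correct identification of the parameters $\beta,\gamma,\delta$. First I would set $p(z):=zf'(z)/f(z)$ and $q_i(z):=zg_i'(z)/g_i(z)$. Since $f,g_i\in\mathcal{A}$, each of these is analytic with value $1$ at the origin, so $p\in\mathcal{H}[1,1]$ and $q_i(0)=1$; the hypothesis $0\neq zf'(z)/f(z)$ together with the condition $\RE(q_i(z)/\lambda)>0$ (which forces $q_i(z)\neq0$) guarantees nonvanishing.

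The key computation is a logarithmic derivative. Differentiating $\log p(z)=\log z+\log f'(z)-\log f(z)$ and multiplying by $z$ gives
\[ \frac{zp'(z)}{p(z)}=1+\frac{zf''(z)}{f'(z)}-\frac{zf'(z)}{f(z)}=1+\frac{zf''(z)}{f'(z)}-p(z). \]
Substituting this into the inner factor appearing in Theorem~\ref{th4} collapses it to
\[ (1-\lambda)\frac{zf'(z)}{f(z)}+\lambda\left(1+\frac{zf''(z)}{f'(z)}\right)=p(z)+\lambda\,\frac{zp'(z)}{p(z)}. \]
This is the crucial simplification, and the same identity applied to $q_i$ and $g_i$ shows that the function $h_i$ of Theorem~\ref{th4} coincides with the $h_i$ of Theorem~\ref{th3}.

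With this in hand I would choose $\beta=1/\lambda$, $\gamma=0$ and $\delta=0$ (taking $\lambda\neq0$, as is forced by $\RE(q_i(z)/\lambda)>0$). Then $\beta p(z)+\gamma=p(z)/\lambda$, so $p(z)+\delta+zp'(z)/(\beta p(z)+\gamma)=p(z)+\lambda zp'(z)/p(z)$, matching the expression above; hence the subordinand of Theorem~\ref{th3} reduces exactly to the expression in Theorem~\ref{th4}, and $p,q_i\in\mathcal{R}(\alpha,1/\lambda,0,0,\mu)$. It then remains to check that the two hypotheses of Theorem~\ref{th3} reduce to those of Theorem~\ref{th4}. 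For the first, since $\delta=\gamma=0$ the quantity becomes $(1+\alpha/\mu)q_i(z)/\lambda$; because $\alpha+\mu>0$ and $\mu>0$ give $1+\alpha/\mu>0$, its real part is positive precisely when $\RE(q_i(z)/\lambda)>0$. For the second, $R_i(z)=\lambda zq_i'(z)/q_i(z)$ yields, again by logarithmic differentiation, $zR_i'(z)/R_i(z)=1+zq_i''(z)/q_i'(z)-zq_i'(z)/q_i(z)$, so that $(\alpha/\mu)zq_i'(z)/q_i(z)+zR_i'(z)/R_i(z)=(\alpha/\mu-1)zq_i'(z)/q_i(z)+1+zq_i''(z)/q_i'(z)$, which is exactly the second hypothesis of Theorem~\ref{th4}.

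Finally, the univalence and $\mathcal{Q}$-membership assumptions placed on $zf'/f$ are precisely what Theorem~\ref{th3} demands of $p$, so applying Theorem~\ref{th3} gives $q_1(z)\prec p(z)\prec q_2(z)$, that is, $zg_1'(z)/g_1(z)\prec zf'(z)/f(z)\prec zg_2'(z)/g_2(z)$, with $q_1,q_2$ the best subordinant and best dominant. I do not expect a genuine obstacle: the entire content lies in spotting the substitution $\beta=1/\lambda$, $\gamma=\delta=0$ and in the two logarithmic-derivative identities. The only point requiring a little care is verifying that $p$ and the $q_i$ truly lie in $\mathcal{R}(\alpha,1/\lambda,0,0,\mu)$ (nonvanishing, together with well-definedness of the principal powers), and this follows from the stated positivity and nonvanishing hypotheses.
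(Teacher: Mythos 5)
Your proposal is correct and is exactly the paper's proof: the authors deduce Theorem~\ref{th4} from Theorem~\ref{th3} by taking $\gamma=\delta=0$, $\beta=1/\lambda$, $p(z)=zf'(z)/f(z)$ and $q_i(z)=zg_i'(z)/g_i(z)$, which is precisely your parameter choice. The only difference is that you spell out the logarithmic-derivative identities and the reduction of the two positivity hypotheses (including the use of $\alpha+\mu>0$ to get $1+\alpha/\mu>0$), details the paper leaves to the reader.
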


\begin{proof}
The result follows from Theorem~\ref{th3} by taking
$\gamma=\delta=0$, $\beta=1/\lambda$, and
\[ p(z):=  \frac{zf'(z)}{f(z)} \text{ and } q_i(z) :=
\frac{zg_i'(z)}{g_i(z)}\quad (i=1,2). \qedhere\]
\end{proof}

The following two  corollaries are immediate consequences of
Theorem~\ref{th3} (or Theorem~\ref{th4}):
\begin{corollary} \cite{ali}
Let $\alpha \in\mathbb{C}$, and $q_i(z)\not=0$ ($i=1,2$) be
univalent in $\UD $. Assume that $ \RE\left[ \overline{\alpha}
q_i(z) \right]> 0 $ for $i=1,2$ and $zq_i'(z)/q_i(z)$ ($i=1,2$) is
starlike univalent in $\UD $. If $f\in \mathcal{A}$,
$0\not=zf'(z)/f(z)\in \mathcal{H}[1,1]\cap \mathcal{Q}$,
$(1-\alpha)\frac{zf'(z)}{f(z)}+\alpha\left(1+\frac{zf''(z)}{f'(z)}\right)$
is univalent in $\UD $, then
\[ q_1(z)+ \alpha \frac{zq_1'(z)}{q_1(z)}\prec
 (1-\alpha)\frac{zf'(z)}{f(z)}+\alpha\left(1+\frac{zf''(z)}{f'(z)}\right)
 \prec q_2(z)+ \alpha \frac{zq_2'(z)}{q_2(z)}\]
implies
\[  q_1(z)\prec \frac{zf'(z)}{f(z)}\prec q_2(z).\]
 Further $q_1$ and $q_2$ are respectively the best subordinant and best dominant.
\end{corollary}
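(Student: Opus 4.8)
The plan is to read off the corollary as the special case of Theorem~\ref{th4} in which the exponent $\alpha$ is set to $0$, $\mu=1$, and the free parameter $\lambda$ is taken to be the corollary's constant $\alpha$. This is purely a matter of identifying parameters and rewriting logarithmic derivatives, so no new subordination argument is needed. The only notational hazard is that the symbol $\alpha$ of the corollary plays the role of $\lambda$ in Theorem~\ref{th4}, while the exponent $\alpha$ of that theorem is put to zero; and since the hypothesis $\RE[\overline{\alpha}q_i(z)]>0$ cannot hold for $\alpha=0$, we may assume $\alpha\neq0$, so that $\lambda=\alpha$ (and hence $\beta=1/\lambda$ in the underlying Theorem~\ref{th3}) is legitimate.

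Write $p(z):=zf'(z)/f(z)$ and $q_i(z):=zg_i'(z)/g_i(z)$. The first step is the logarithmic-derivative identity $1+zf''(z)/f'(z)=p(z)+zp'(z)/p(z)$, together with the analogous one for each $g_i$. With it,
\[(1-\alpha)\frac{zf'(z)}{f(z)}+\alpha\left(1+\frac{zf''(z)}{f'(z)}\right)=p(z)+\alpha\frac{zp'(z)}{p(z)},\]
and, because the exponent is $0$ and $\mu=1$, this is exactly the middle member appearing in Theorem~\ref{th4}; likewise $h_i(z)=q_i(z)+\alpha\,zq_i'(z)/q_i(z)$. Thus the sandwich displayed in the corollary is, term for term, the sandwich of Theorem~\ref{th4} under the chosen parameters.

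It remains to verify that the two hypotheses of Theorem~\ref{th4} collapse to those stated. With exponent $0$, $\mu=1$, and $\lambda=\alpha$, the first condition becomes $\RE\left(q_i(z)/\alpha\right)>0$; since $1/\alpha=\overline{\alpha}/|\alpha|^2$, this is equivalent to $\RE[\overline{\alpha}q_i(z)]>0$. The second condition becomes $\RE\left(1+zq_i''(z)/q_i'(z)-zq_i'(z)/q_i(z)\right)>0$, which, by the same logarithmic-derivative identity applied now to $zq_i'/q_i$, is precisely the assertion that $zq_i'(z)/q_i(z)$ is starlike. The admissibility requirements $p(0)=1$ (from $f\in\mathcal{A}$), $p\neq0$, $0\neq zf'/f\in\mathcal{H}[1,1]\cap\mathcal{Q}$, and univalence of the middle expression are exactly those assumed, so Theorem~\ref{th4} applies and yields $q_1(z)\prec zf'(z)/f(z)\prec q_2(z)$ with $q_1,q_2$ the best subordinant and best dominant. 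I anticipate no genuine obstacle: the entire argument is bookkeeping, the only delicate points being the $\alpha$-versus-$\lambda$ relabeling and the elementary equivalence $\RE(q/\alpha)>0\Leftrightarrow\RE(\overline{\alpha}q)>0$.
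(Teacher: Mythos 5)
Your proposal is correct and follows essentially the same route as the paper, which offers no separate proof but states this corollary as an ``immediate consequence of Theorem~\ref{th3} (or Theorem~\ref{th4})'' --- precisely the specialization you perform (exponent $0$, $\mu=1$, $\lambda=\alpha$, equivalently $\beta=1/\alpha$, $\gamma=\delta=0$ in Theorem~\ref{th3}), with your bookkeeping (the identity $1+zf''/f'=p+zp'/p$, the equivalence $\RE\left(q_i(z)/\alpha\right)>0\Leftrightarrow\RE\left[\overline{\alpha}q_i(z)\right]>0$, and the identification of the second hypothesis with starlikeness of $zq_i'/q_i$) all accurate. The only cosmetic point is that to invoke Theorem~\ref{th4} literally you must realize the given $q_i$ as $zg_i'/g_i$ for some $g_i\in\mathcal{A}$, e.g. $g_i(z)=z\exp\int_0^z\frac{q_i(t)-1}{t}\,dt$ (using the implicit normalization $q_i(0)=1$), whereas applying Theorem~\ref{th3} directly avoids this step altogether.
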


\begin{corollary}  \cite{ali}
 Let $q_i(z)\not=0$ be  univalent in $\UD $ with $ \RE q_i(z)> 0$. Let
$zq_i'(z)/q_i^2(z)$ be starlike univalent in $\UD $ for $i=1,2$. If
$f\in\mathcal{A}$, $0\not=zf'(z)/f(z)\in \mathcal{H}[1,1]\cap Q$,
$\frac{1+zf''(z)/f'(z)}{zf'(z)/f(z)} $ is univalent in $\UD $, then
\[ 1+ \frac{zq_1'(z)}{q_1^2(z)}
\prec \frac{1+zf''(z)/f'(z)}{zf'(z)/f(z)} \prec 1+
\frac{zq_2'(z)}{q_2^2(z)}\] implies $q_1(z)\prec zf'(z)/f(z)\prec
q_2(z)$. Further $q_1$ and $q_2$ are respectively the best
subordinant and best dominant.
\end{corollary}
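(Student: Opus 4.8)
The plan is to deduce the corollary from the sandwich Theorem~\ref{th3} by specializing its parameters so that the functional $P$ defining $\mathcal{R}(\alpha,\beta,\gamma,\delta,\mu)$ collapses to the expression $1+zq'(z)/q^2(z)$. Writing $p(z):=zf'(z)/f(z)$, the first thing I would record is the logarithmic-derivative identity $zp'(z)/p(z)=1+zf''(z)/f'(z)-p(z)$, so that $1+zf''(z)/f'(z)=p(z)+zp'(z)/p(z)$ and hence
\[
\frac{1+zf''(z)/f'(z)}{zf'(z)/f(z)}=1+\frac{zp'(z)}{p^2(z)}.
\]
The same computation applied to each $q_i$ identifies the dominants/subordinants $h_i(z)=1+zq_i'(z)/q_i^2(z)$ in the hypothesis. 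The substitution that matches this is $\alpha=-1$, $\mu=1$, $\beta=1$, $\gamma=\delta=0$: then $(p)^{\alpha}(p+\delta+zp'/(\beta p+\gamma))^{\mu}=(1/p)(p+zp'/p)=1+zp'/p^2$, exactly the middle member. These values are admissible for the class since $0<\mu\le1$, $\beta\ne0$, and $\alpha+\mu=0\ge0$.

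Next I would verify the structural hypotheses and reduce the two real-part conditions. The assumption $\RE q_i>0$ forces $q_i\ne0$ in $\UD$, so each $q_i$ lies in $\mathcal{R}(-1,1,0,0,1)$ and the principal powers (here just $1/q_i$ and $1/q_i^2$) are well defined; the assumption $0\ne zf'(z)/f(z)\in\mathcal{H}[1,1]\cap\mathcal{Q}$ places $p$ in $\mathcal{R}\cap\mathcal{H}\cap\mathcal{Q}$, and the univalence of the middle functional is assumed outright. Under the chosen parameters the first condition of Theorem~\ref{th3} reduces to $\RE\big((\beta q_i+\gamma)(1+\alpha/\mu+\alpha\delta/(\mu q_i))\big)=\RE\big(q_i(1+(-1))\big)=0$, while with $R_i(z)=zq_i'(z)/q_i(z)$ a short computation of $zR_i'/R_i$ turns the second condition into
\[
\RE\left(-\frac{zq_i'(z)}{q_i(z)}+\frac{zR_i'(z)}{R_i(z)}\right)=\RE\left(1+\frac{zq_i''(z)}{q_i'(z)}-2\,\frac{zq_i'(z)}{q_i(z)}\right)>0,
\]
which is precisely the starlikeness of $zq_i'(z)/q_i^2(z)$ assumed in the corollary.

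The delicate point, and the step I expect to be the main obstacle, is that the first real-part condition holds with equality rather than strict inequality: the boundary choice $\alpha+\mu=0$ makes the ``$\vartheta$-part'' $\vartheta(w)=w^{\alpha/\mu+1}+\delta w^{\alpha/\mu}$ constant ($\equiv1$), so this case sits outside the strict hypotheses of Theorem~\ref{th3} (and of Theorem~\ref{th4}, whose constraint $\alpha+\mu>0$ is likewise violated). I would resolve this by noting that the quantity actually required to be positive in Lemma~\ref{lem1} is $\RE(zh_i'/Q_i)$, which equals the \emph{sum} of the two conditions, namely $0+\RE(zQ_i'/Q_i)>0$ by the starlikeness of $Q_i=zq_i'/q_i^2$. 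Equivalently, one may bypass Theorem~\ref{th3} and apply Lemma~\ref{lem1} and Lemma~\ref{lem2} directly with $\vartheta(w)\equiv1$ and $\varphi(w)=1/w^2$: then $Q_i=zq_i'/q_i^2$ is starlike by hypothesis, $\varphi\ne0$ on $q_i(\UD)$, and $zh_i'/Q_i=zQ_i'/Q_i$ has positive real part. Carrying out the subordination step (Lemma~\ref{lem1}) and the superordination step (Lemma~\ref{lem2}) and chaining them then yields $q_1(z)\prec zf'(z)/f(z)\prec q_2(z)$, with $q_1$ and $q_2$ the best subordinant and best dominant.
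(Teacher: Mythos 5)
Your algebra and parameter identification are exactly right: with $p(z)=zf'(z)/f(z)$ one has $\bigl(1+zf''(z)/f'(z)\bigr)/\bigl(zf'(z)/f(z)\bigr)=1+zp'(z)/p^{2}(z)$, and the only way to realize this expression inside the class $\mathcal{R}(\alpha,\beta,\gamma,\delta,\mu)$ is $\alpha=-1$, $\mu=1$, $\beta=1$, $\gamma=\delta=0$. Your observation that this choice makes the first displayed condition of Theorem~\ref{th3} identically zero (and violates the constraint $\alpha+\mu>0$ of Theorem~\ref{th4}), so that the corollary is not a literal specialization of either theorem, is also correct; here you are in fact more careful than the paper, which simply declares the corollary an immediate consequence. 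Your repair of the \emph{subordination} half is sound: in Lemma~\ref{lem1} the two hypotheses of Theorem~\ref{th1} enter only through their sum, since $zh'/Q=\vartheta'(q)/\varphi(q)+zQ'/Q$, so a vanishing first summand is harmless, and with $\vartheta\equiv 1$, $\varphi(w)=1/w^{2}$ you correctly obtain $zf'(z)/f(z)\prec q_2(z)$ with $q_2$ the best dominant.

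The gap is in the superordination half. Lemma~\ref{lem2} does \emph{not} have the hypothesis $\RE\left(zh'(z)/Q(z)\right)>0$; its hypothesis is $\RE\left[\vartheta'(q(z))/\varphi(q(z))\right]>0$, and this is precisely the quantity you showed to vanish: for the present $\vartheta,\varphi$ one computes $\vartheta'(q)/\varphi(q)=(\beta q+\gamma)\left(1+\alpha/\mu+\alpha\delta/(\mu q)\right)\equiv 0$, i.e.\ $\vartheta\equiv 1$ forces $\vartheta'\equiv 0$. So the ``sum'' argument that rescues Lemma~\ref{lem1} cannot rescue Lemma~\ref{lem2}: there the vanishing quantity is a standalone hypothesis, not one summand of a hypothesis, and your verification list for Lemma~\ref{lem2} (starlikeness of $Q_i$, $\varphi\neq 0$, positivity of $\RE(zh_i'/Q_i)$) checks the wrong condition. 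As written, your proof therefore establishes only the best-dominant half, while $q_1(z)\prec zf'(z)/f(z)$ and the claim that $q_1$ is the best subordinant remain unproved. A telltale sign is that you never genuinely use the hypothesis $\RE q_i(z)>0$ (only to note $q_i\neq 0$, which is assumed separately); that is the kind of condition a correct treatment of the subordinant half, as in the cited source \cite{ali}, must lean on. An honest repair cannot go through Lemma~\ref{lem2} at all: one must return to the superordination machinery of \cite{Miller2003}, where for constant $\vartheta$ the admissibility requirement reduces to $Q_1(z)/m\in Q_1(\UD)$ for all $m\geq 1$, $Q_1(z)=zq_1'(z)/q_1^{2}(z)$, which does follow from starlikeness of $Q_1$ --- but that is an argument at the level of the general theory, not an application of the lemmas quoted in this paper.
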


Another application of Theorem~\ref{th3} yields the following
result:
\begin{corollary} \label{corali} \cite{ali}
Let  $q_1$ and $q_2$ be convex univalent  in $\UD $. Let $0 \neq
\alpha \in\mathbb{C}$, and assume that $ \RE q_i(z)>
\RE\frac{\alpha-1}{2\alpha}$ for $i=1,2$. If $f\in \mathcal{A}$,
$zf'(z)/f(z)\in \mathcal{H}[1,1]\cap \mathcal{Q}$,
$\frac{zf'(z)}{f(z)}+\alpha\frac{z^2f''(z)}{f(z)}$ is univalent in
$\UD $, then
\[ (1-\alpha)q_1(z)+\alpha q_1^2(z)+\alpha zq_1'(z) \prec
\frac{zf'(z)}{f(z)}\left(1+\alpha\frac{zf''(z)}{f'(z)} \right) \prec
(1-\alpha)q_2(z)+\alpha q_2^2(z)+\alpha zq_2'(z)\] implies
\[ q_1(z)\prec \frac{zf'(z)}{f(z)}\prec q_2(z).\]
Further $q_1$ and $q_2$ are respectively the best subordinant and
best dominant.
\end{corollary}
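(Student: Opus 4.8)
The plan is to recognise the central expression in Corollary~\ref{corali} as a special instance of the operator in Theorem~\ref{th3}. First I would set $p(z):=zf'(z)/f(z)$, so that $p\in\mathcal{H}[1,1]$. Logarithmic differentiation gives $zp'(z)/p(z)=1+zf''(z)/f'(z)-p(z)$, whence $p(z)\,zf''(z)/f'(z)=zp'(z)-p(z)+p^2(z)$. Substituting this into $\frac{zf'(z)}{f(z)}\left(1+\alpha\frac{zf''(z)}{f'(z)}\right)=p(z)+\alpha\,p(z)\,zf''(z)/f'(z)$ yields the identity $\frac{zf'(z)}{f(z)}\left(1+\alpha\frac{zf''(z)}{f'(z)}\right)=(1-\alpha)p(z)+\alpha p^2(z)+\alpha zp'(z)$. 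Running the same computation with $q_i$ in place of $p$ shows that the three members of the subordination chain in the hypothesis are exactly $(1-\alpha)q_1+\alpha q_1^2+\alpha zq_1'$, the transformed $p$-expression, and $(1-\alpha)q_2+\alpha q_2^2+\alpha zq_2'$, as required.

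Next, since $\alpha\neq0$ and subordination is invariant under multiplication by a nonzero constant, I would divide the whole chain by $\alpha$ and rewrite each member as $w^2+\frac{1-\alpha}{\alpha}w+zw'$ (with $w=p$ or $w=q_i$). To match Theorem~\ref{th3}, whose parameters I rename $(a,\beta,\gamma,\delta,\mu)$ to avoid clashing with the corollary's $\alpha$, I take $a=1$, $\mu=1$, $\beta=1$, $\gamma=0$, and $\delta=(1-\alpha)/\alpha$. With these values $w^{a}=w$, $\mu=1$, and $zw'/(\beta w+\gamma)=zw'/w$, so the bracket $w+\delta+zw'/(\beta w+\gamma)$ becomes $w+\frac{1-\alpha}{\alpha}+zw'/w$ and the product $(w)^{a}\left(w+\delta+\frac{zw'}{\beta w+\gamma}\right)^{\mu}$ is precisely $w^2+\frac{1-\alpha}{\alpha}w+zw'$. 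These choices obey the standing constraints $0<\mu\le1$, $a+\mu\ge0$, $\beta\neq0$.

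It then remains to confirm that the two hypotheses of Theorem~\ref{th3} collapse to the stated ones. With $\beta=1,\gamma=0,a/\mu=1,\delta=(1-\alpha)/\alpha$, the first condition $\RE\left((\beta q_i+\gamma)(1+\frac{a}{\mu}+\frac{a\delta}{\mu q_i})\right)>0$ becomes $\RE\left(2q_i+\frac{1-\alpha}{\alpha}\right)>0$, equivalently $\RE q_i(z)>\RE\frac{\alpha-1}{2\alpha}$, which is the assumed half-plane condition. For the second, $R_i(z)=zq_i'(z)/q_i(z)$ gives $zR_i'(z)/R_i(z)=1+zq_i''(z)/q_i'(z)-zq_i'(z)/q_i(z)$; adding $\frac{a}{\mu}\,zq_i'/q_i=zq_i'/q_i$ cancels the last term and leaves $\RE\left(1+zq_i''(z)/q_i'(z)\right)>0$, which is exactly the convexity of $q_i$. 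Thus both hypotheses hold for $i=1,2$.

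Finally I would record that each $q_i$ lies in $\mathcal{R}(1,1,0,(1-\alpha)/\alpha,1)$—using the standing normalisations $q_i(0)=1$ and $q_i(z)\neq0$ in $\UD$ that make the operator well-defined and that are consistent with the conclusion $q_1\prec zf'/f\prec q_2$—and that the assumptions $0\neq zf'/f\in\mathcal{H}[1,1]\cap\mathcal{Q}$ together with the univalence of the central expression are precisely what Theorem~\ref{th3} requires of $p$. Applying Theorem~\ref{th3} then gives $q_1(z)\prec p(z)\prec q_2(z)$ with $q_1,q_2$ the best subordinant and best dominant, which is the assertion. I expect the only genuine points to watch are the bookkeeping in the logarithmic-derivative identity and the exact cancellation that turns the second hypothesis into convexity; the scaling-invariance of subordination under the nonzero factor $\alpha$ and the nonvanishing/normalisation needed for class membership are the remaining routine details.
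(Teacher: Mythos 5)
Your proof is correct and follows exactly the route the paper intends: the paper states this corollary as an immediate consequence of Theorem~\ref{th3} (equivalently, as the $\Phi(w)=w$ case of Theorem~\ref{phith1}), and your specialization $a=\mu=\beta=1$, $\gamma=0$, $\delta=(1-\alpha)/\alpha$ together with the logarithmic-derivative identity and the scaling by $\alpha$ is precisely the omitted verification, including the reduction of the two real-part hypotheses to the half-plane condition and to convexity of $q_i$.
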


\section{Application to $\Phi$-like  Functions}

Let $\Phi$ be an analytic function  in a domain containing $f(\UD
)$, $\Phi(0) = 0$ and  $\Phi'(0) >  0$. A function $f \in
\mathcal{A}$ is called {\em $\Phi$-like } if
\[ \RE\frac{zf^{'}(z)}{\Phi(f(z))} >0  \quad (z\in\UD ). \]
This concept was introduced by Brickman \cite{brick} and it was
shown that  an analytic function $f\in \mathcal{A}$ is univalent if
and only if $f$ is $\Phi$-like for some $\Phi$. When $\Phi(w)=w$ and
$\Phi(w)=\lambda w$, the  $\Phi$-like function $f$ is respectively
starlike and spirallike of type $\arg \lambda$. Ruscheweyh
\cite{rus} introduced and studied  the following general class of
$\Phi$-like functions:
\begin{definition}
Let $\Phi$ be analytic in a domain containing $f(\UD )$, $\Phi(0) =
0$, $\Phi'(0) = 1$ and $\Phi(\omega)\neq 0$ for $\omega \in f(\UD )
- \{ 0 \}.$ Let $q$ be a fixed analytic function in $\UD $, with
$q(0)=1$. A function $f \in \mathcal{A}$ is called $\Phi$-like with
respect to $q$ if
\[ \frac{zf^{'}(z)}{\Phi(f(z))} \prec q(z) \quad (z\in\UD ). \]
\end{definition}

\begin{theorem}\label{phith1} Let $\alpha\not= 0$ be a complex number
and  $q_i$ $(i=1,2)$ be convex univalent in $\UD $. Define $h_i$ by
\begin{equation*}
h_i(z) := \alpha q_i^2(z) +(1-\alpha)q_i(z) + \alpha z q_i'(z) \quad
(i=1,2),
\end{equation*}
and suppose that
\begin{equation*}\label{phieq1}
\RE \left( \frac{1-\alpha}{\alpha} + 2q_i(z)  \right ) >  0 \quad
(i=1,2; z\in\UD ).
\end{equation*}
If $f\in \mathcal{A}$ satisfies $f\in\mathcal{H}[1,1]\cap
\mathcal{Q}$ and $\frac{zf'(z)}{\Phi (f(z))} \left(  1 +
\frac{\alpha zf''(z)}{f'(z)} + \frac{\alpha z (f'(z) - (\Phi
(f(z)))')}{\Phi (f(z))}\right )$ is univalent in $\UD $, then
\begin{equation}\label{phieq2}
h_1(z) \prec \frac{zf'(z)}{\Phi (f(z))} \left (  1 + \frac{\alpha
zf''(z)}{f'(z)}
 + \frac{\alpha z (f'(z) - (\Phi (f(z)))')}{\Phi (f(z))}\right
) \prec  h_2(z) \end{equation} implies
\[
q_1(z) \prec \frac{zf'(z)}{\Phi(f(z))} \prec q_2(z).
\]
Further $q_1$ and $q_2$ are respectively the best subordinant and
the  best dominant.
\end{theorem}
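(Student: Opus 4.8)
The plan is to derive Theorem~\ref{phith1} as a direct specialization of the sandwich Theorem~\ref{th3}. The key observation is that the complicated-looking expression inside the subordination chain \eqref{phieq2} is precisely the left-hand side of \eqref{sub3} once we make the right identifications. So first I would set
\[
p(z) := \frac{zf'(z)}{\Phi(f(z))},
\]
and choose the parameters $\alpha\mapsto 1$ (the exponent), $\mu:=1$, $\beta:=1/\alpha$, $\gamma:=0$, and $\delta:=(1-\alpha)/\alpha$ in the class $\mathcal{R}(\alpha,\beta,\gamma,\delta,\mu)$. With $\mu=1$ and exponent $1$, the master expression
\[
(p(z))^{1}\left(p(z)+\delta+\frac{zp'(z)}{\beta p(z)+\gamma}\right)^{1}
= p(z)\left(p(z)+\frac{1-\alpha}{\alpha}+\frac{\alpha\,zp'(z)}{p(z)}\right)
\]
expands, after multiplying through, to $\alpha p^2(z)+(1-\alpha)p(z)+\alpha zp'(z)$ up to the factor $\alpha$; matching against $h_i(z)=\alpha q_i^2(z)+(1-\alpha)q_i(z)+\alpha zq_i'(z)$ confirms the identification (one absorbs the overall $\alpha$ consistently on both sides of the subordination, which is legitimate since subordination is preserved under multiplication by a fixed nonzero constant).

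The main computational obstacle, and the step I expect to require the most care, is verifying that the logarithmic-derivative expression appearing in \eqref{phieq2} really equals $p(z)+\frac{1-\alpha}{\alpha}+\frac{\alpha zp'(z)}{p(z)}$ scaled appropriately. Here I would compute $zp'(z)$ by logarithmic differentiation of $p(z)=zf'(z)/\Phi(f(z))$, obtaining
\[
\frac{zp'(z)}{p(z)} = 1 + \frac{zf''(z)}{f'(z)} - \frac{z(\Phi(f(z)))'}{\Phi(f(z))},
\]
and then substitute. The term $\frac{\alpha z(f'(z)-(\Phi(f(z)))')}{\Phi(f(z))}$ in the statement is designed so that when added to $p(z)\cdot\frac{\alpha zf''(z)}{f'(z)}$-type contributions, the algebra collapses to the clean Briot--Bouquet-type form. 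This bookkeeping is routine but must be done honestly; it is the crux of translating the $\Phi$-like setting into the abstract framework.

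Once the identification is in place, the remaining hypotheses transfer mechanically. The convexity of $q_i$ guarantees $q_i$ is univalent and that $R_i(z)=zq_i'(z)/(\beta q_i(z)+\gamma)=\alpha zq_i'(z)/q_i(z)$ is starlike of the required type, so condition \eqref{th2e3} (with $\mu=1$ and the chosen $\alpha$-exponent) reduces to a convexity-type positivity that follows from $q_i$ convex. Condition \eqref{th2e2} becomes
\[
\RE\left((\tfrac{1}{\alpha}q_i(z))\left(1+(\tfrac{1-\alpha}{\alpha})/q_i(z)\cdot(\cdots)\right)\right)>0,
\]
which after simplification is exactly the stated hypothesis $\RE\bigl(\frac{1-\alpha}{\alpha}+2q_i(z)\bigr)>0$. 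Finally the admissibility requirements $f\in\mathcal{H}[1,1]\cap\mathcal{Q}$ and univalence of the middle expression are hypothesized directly, matching the corresponding conditions on $p$ in Theorem~\ref{th3}. Invoking Theorem~\ref{th3} then yields $q_1(z)\prec p(z)\prec q_2(z)$ with $q_1,q_2$ the best subordinant and best dominant, which is the desired conclusion.
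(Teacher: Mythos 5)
Your overall strategy is exactly the paper's: set $p(z)=zf'(z)/\Phi(f(z))$, use logarithmic differentiation to convert the middle expression in (\ref{phieq2}) into $\alpha p^2(z)+(1-\alpha)p(z)+\alpha zp'(z)$, and then invoke Theorem~\ref{th3} (the paper leaves the parameter specialization implicit; you tried to make it explicit, which is the right instinct). Your formula $zp'(z)/p(z)=1+zf''(z)/f'(z)-z(\Phi(f(z)))'/\Phi(f(z))$ is correct and does make that conversion work, although you only describe this verification rather than carry it out. The genuine flaw is your parameter choice: $\beta:=1/\alpha$ is wrong, and with it the identification you claim simply does not hold. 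With exponent $1$, $\mu=1$, $\gamma=0$, $\delta=(1-\alpha)/\alpha$ and $\beta=1/\alpha$, the master expression is
\[
p(z)\left(p(z)+\frac{1-\alpha}{\alpha}+\frac{\alpha zp'(z)}{p(z)}\right)=p^2(z)+\frac{1-\alpha}{\alpha}\,p(z)+\alpha zp'(z),
\]
and multiplying through by $\alpha$ gives $\alpha p^2(z)+(1-\alpha)p(z)+\alpha^2 zp'(z)$, not $\alpha p^2(z)+(1-\alpha)p(z)+\alpha zp'(z)$. In fact $p^2+\frac{1-\alpha}{\alpha}p+\alpha zp'$ is a constant multiple of $\alpha p^2+(1-\alpha)p+\alpha zp'$ only when $\alpha=1$: matching the $p^2$-coefficients forces the constant to be $1/\alpha$, while matching the $zp'$-coefficients forces it to be $1$. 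So no ``absorption of an overall $\alpha$'' can repair the mismatch.

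The same wrong $\beta$ also spoils your reduction of hypothesis (\ref{th2e2}): with $\beta=1/\alpha$ it reads $\RE\bigl[\frac{1}{\alpha}\bigl(2q_i(z)+\frac{1-\alpha}{\alpha}\bigr)\bigr]>0$, which coincides with the stated hypothesis $\RE\bigl(\frac{1-\alpha}{\alpha}+2q_i(z)\bigr)>0$ only when $\alpha$ is a positive real number, whereas the theorem allows any complex $\alpha\neq 0$. The fix is simple: take $\beta=1$ (keeping $\gamma=0$, $\delta=(1-\alpha)/\alpha$, exponent $=\mu=1$). Then the master expression equals $p^2+\frac{1-\alpha}{\alpha}p+zp'=\frac{1}{\alpha}\bigl(\alpha p^2+(1-\alpha)p+\alpha zp'\bigr)$, so dividing the entire chain (\ref{phieq2}) by $\alpha$ --- legitimate, as you note, since subordination survives multiplication by a fixed nonzero constant --- turns it exactly into (\ref{sub3}); condition (\ref{th2e2}) becomes precisely $\RE\bigl(\frac{1-\alpha}{\alpha}+2q_i(z)\bigr)>0$, and condition (\ref{th2e3}) becomes $\RE\bigl(1+zq_i''(z)/q_i'(z)\bigr)>0$, i.e.\ convexity of $q_i$ (this last part of your argument was already fine, since the constant factor in $R_i$ drops out of its logarithmic derivative). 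With $\beta=1$ your argument becomes correct and coincides with the paper's proof.
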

\begin{proof}
Define the function $p$ by
\begin{equation}\label{phieq3}
p(z) := \frac{zf'(z)}{\Phi(f(z))} \quad (z\in\UD ).
\end{equation}
Then the function $p$ is analytic in $\UD $ with $p(0)=1$. From
(\ref{phieq3}), it follows that
\begin{eqnarray}\label{phieq5} &&
\frac{zf'(z)}{\Phi (f(z))} \left (  1 + \frac{\alpha zf''(z)}{f'(z)}
+ \frac{\alpha z (f'(z) - (\Phi (f(z)))')}{\Phi (f(z))}\right
)\nonumber \\ &=& p(z)\left(1 + \alpha \left ( \frac{zp'(z)}{p(z)} -
1 \right ) + \alpha p(z) \right ) \nonumber
\\ & = & \alpha p^2(z) +(1-\alpha)p(z) + \alpha z p'(z).
\end{eqnarray}
Putting (\ref{phieq5}) in the subordination (\ref{phieq2}) yields
\begin{equation*}\label{phieq6}
h_1(z) \prec \alpha p^2(z) +(1-\alpha)p(z) + \alpha z p'(z) \prec
h_2(z).
\end{equation*}
The result now follows from Theorem~\ref{th3}.
\end{proof}

\begin{remark}When $\Phi(w)=w$, Theorem \ref{phith1} reduces to
Corollary \ref{corali}.
\end{remark}

\end{document}